\documentclass[11 pt]{amsart}
\usepackage{amsmath,amsthm,amsfonts,amssymb,latexsym, mathrsfs}
\usepackage{hyperref}
\usepackage{color}

\theoremstyle{plain}

\newtheorem{theorem}{Theorem}[section]
\numberwithin{equation}{section} 

\numberwithin{figure}{section} 

\theoremstyle{plain}

\theoremstyle{plain}

\theoremstyle{plain}

\newtheorem{claim}[theorem]{Claim}
\theoremstyle{plain}

\theoremstyle{plain}

\theoremstyle{plain}

\newtheorem{exm}[theorem]{Example}
\theoremstyle{plain}

\newtheorem{lemma}[theorem]{Lemma}
\theoremstyle{plain}

\theoremstyle{plain}

\theoremstyle{plain}

\theoremstyle{plain}

\newtheorem{rmr}[theorem]{Remark}
\theoremstyle{plain}

\theoremstyle{plain}

\newtheorem*{theorem*}{Theorem}

\theoremstyle{definition}
\newtheorem{dfn}[theorem]{Definition}
\theoremstyle{definition}

\newtheorem{definition}[theorem]{Definition}

\def\Ind#1#2{#1\setbox0=\hbox{$#1x$}\kern\wd0\hbox to 0pt{\hss$#1\mid$\hss}
\lower.9\ht0\hbox to 0pt{\hss$#1\smile$\hss}\kern\wd0}
\def\Notind#1#2{#1\setbox0=\hbox{$#1x$}\kern\wd0\hbox to 0pt{\mathchardef\nn="0236\hss$#1\nn$\kern1.4\wd0\hss}\hbox to 0pt{\hss$#1\mid$\hss}\lower.9\ht0
\hbox to 0pt{\hss$#1\smile$\hss}\kern\wd0}

\newcommand{\G}{\mathcal{G}}

\DeclareMathOperator{\Def}{Def}

\title{A fixed-point theorem for definably amenable groups}

\author{Juan Felipe Carmona}
\address{Universidad Antonio Nari\~no, Calle 58 A No. 37 - 94, Bogot\'{a}, Colombia}
\email{jfcarmonag@gmail.com}

\author{Kevin D\'avila}
\address{Fundaci\'on Universidad del Norte, Km.5 V\'ia Puerto Colombia, Barranquilla, Colombia}
\email{kevindavilacastellar@gmail.com}

\author{Alf Onshuus}
\address{Universidad de los Andes,
Cra 1 No 18A-10, Bogot\'{a}, Colombia}
\email{aonshuus@uniandes.edu.co}

\author{Rafael Zamora}
\address{Escuela de Matem\'atica, Universidad de Costa Rica, San Pedro, San Jose, Costa Rica}
\email{rafael.zamora\_c@ucr.ac.cr}


\begin{document}

\maketitle

\date{}

\begin{abstract}
We prove an analogue of the fixed-point theorem for the case of definably amenable groups.
\end{abstract}

\section{Introduction}

In the context of locally compact groups, there are several
properties that have been proved to be equivalent to
amenability\footnote{In this paper we say that a topological group
is amenable if it admits a left invariant finitely additive
probability measure on the Borel subsets.}. One of those is the
fixed-point property (a generalization of the Markov-Kakutani
theorem), which states that any affine continuous action over a
compact convex subset of a locally convex vector space has a fixed
point.

The analogue of amenability for definable groups was defined by
\cite{Hr1} as follows: A definable group is {\it definably
amenable} if there is a left-invariant finitely-additive
probability measure on all its definable subsets (a left invariant
{\it global Keisler Measure}). It is known that every stable group
is definably amenable \cite{Po1}, and that groups definable in a
dependent theory are definably amenable if and only if they have
an $f$-generic type \cite{Hr1} (called strongly $f$-generic types
in \cite{Ch1}).

In this note we study definably amenable groups and prove that
definable amenability is equivalent to a fixed-point condition. We
do not assume that the group is in any ``good'' class in the sense
of Classification Theory.

The paper is divided as follows: In Section \ref{sec1}. we study
the $\sigma$-topology of a definable group, which allows us to
study the $\sigma$-algebra generated by definable sets (and
measures on it) using topological tools. The notion of
$\sigma$-topology allows us to see definable sets and topological
spaces as objects of the same category. In particular, we prove
that the $\sigma$-continuous image of a definable subset of an
$\omega_1$-saturated structure in a Polish space is compact.

In \ref{sec2} we show that the existence of an invariant mean is
equivalent to the existence of an invariant measure, when the
group is $\omega_1$-saturated. This would be an immediate
corollary of Hahn-Kolmogorov Theorem.

\newpage

In Section \ref{sec4} we prove the main result of the paper:

\begin{theorem*}
The following are equivalent:

\begin{enumerate}
\item $G$ is an $\omega_1$-saturated definably amenable group.

\item If $\pi$ is a linear action of $G$ into a convex compact subset $Y$
of a locally convex topological vector space $V$ such that
\begin{enumerate} \item $\pi_g$ is continuous for every $g\in G$.
\item $\pi_k$ is $\sigma$-continuous for some $k\in Y$.
\end{enumerate}
Then $\pi$ has a fixed point.
\end{enumerate}

\end{theorem*}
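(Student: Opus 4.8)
The plan is to adapt the classical barycenter argument behind Day's fixed-point theorem (the amenable-group strengthening of Markov--Kakutani) to the definable setting, using the $\sigma$-topological machinery of Section~\ref{sec1} and the mean-to-measure passage of Section~\ref{sec2}. The substantive implication is $(1)\Rightarrow(2)$: from an invariant Keisler measure we manufacture a fixed point by averaging an orbit. The converse $(2)\Rightarrow(1)$ is obtained by applying the fixed-point property to the natural action of $G$ on a suitable space of means.

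For $(1)\Rightarrow(2)$, first I would use Section~\ref{sec2} to replace the invariant mean coming from definable amenability by a genuine left-invariant, countably additive probability measure $\mu$ on the $\sigma$-algebra generated by the definable subsets of $G$ (this is where $\omega_1$-saturation enters, via Hahn--Kolmogorov). Fix the point $k\in Y$ whose orbit map $\pi_k\colon G\to Y$, $g\mapsto \pi_g(k)$, is $\sigma$-continuous. For each continuous linear functional $\phi\in V^*$, the real-valued map $g\mapsto \phi(\pi_g(k))$ is then $\sigma$-continuous, hence measurable for $\mu$, and it is bounded because by Section~\ref{sec1} the $\sigma$-continuous image $\pi_k(G)$ is compact. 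Thus $\int_G \phi(\pi_g(k))\,d\mu(g)$ is well defined, and $\phi\mapsto \int_G \phi(\pi_g(k))\,d\mu(g)$ is a continuous linear functional on $V^*$.

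The next step is to realize this functional as evaluation at a genuine point $\bar y\in Y$: since $\pi_k(G)$ is compact and lies in the compact convex set $Y$, its closed convex hull is a compact convex subset of $Y$, so the pushforward $(\pi_k)_*\mu$ admits a barycenter $\bar y\in Y$ characterized by $\phi(\bar y)=\int_G \phi(\pi_g(k))\,d\mu(g)$ for all $\phi\in V^*$. To see $\bar y$ is fixed, take $h\in G$; using that $\pi_h$ is continuous and linear, $\phi(\pi_h\bar y)=(\phi\circ\pi_h)(\bar y)=\int_G \phi(\pi_{hg}(k))\,d\mu(g)$, which by left-invariance of $\mu$ equals $\int_G \phi(\pi_g(k))\,d\mu(g)=\phi(\bar y)$. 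As $V^*$ separates the points of the locally convex space $V$ (Hahn--Banach), $\pi_h\bar y=\bar y$ for every $h\in G$.

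For $(2)\Rightarrow(1)$, let $Y$ be the space of means on the bounded $\sigma$-continuous functions on $G$, viewed as a weak$^*$-compact convex subset of the relevant dual, and let $G$ act by left translation; each translation is weak$^*$-continuous and the orbit map of the evaluation-at-identity mean is $\sigma$-continuous, so a fixed point of this action is precisely a left-invariant mean, which by Section~\ref{sec2} yields definable amenability. The main obstacle I expect lies in the averaging step of the forward direction: one must guarantee that the finitely additive Keisler measure genuinely extends to a countably additive measure against which the $\sigma$-continuous orbit functions can be integrated, and that the resulting barycenter lands in $Y$ rather than merely in a completion of $V$. Both points rest on the compactness result of Section~\ref{sec1} (to keep the image inside a barycentrically closed convex subset of $Y$) together with the mean-to-measure equivalence of Section~\ref{sec2}, and verifying that these combine correctly in the finitely-additive, $\sigma$-topological setting is the delicate part.
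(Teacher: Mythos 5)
Your proposal is correct in substance, but your forward direction takes a genuinely different route from the paper. For $(1)\Rightarrow(2)$ the paper follows Wagon's rendition of Day's argument: it fixes minimal finite open covers $F$ of $Y$, pulls them back through $\pi_k$ to assign masses $\mu_U$ via the extended measure, forms the convex combinations $\Phi(F)=\sum_{U\in F}\mu_U\, s_U^F$ of sample points, and shows that the net $\Phi$ over covers ordered by refinement converges (compactness of $Y$ plus a $V/2$-fineness estimate), that the limit $\tilde k$ is independent of the sample points, and that invariance of $\mu$ forces $\pi_g\tilde k=\tilde k$. You instead integrate directly: push $\mu$ forward under $\pi_k$ and take the barycenter $\bar y\in Y$ characterized by $\phi(\bar y)=\int_G\phi(\pi_g k)\,d\mu(g)$, with invariance following from change of variables and Hahn--Banach separation. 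Your route is shorter and replaces the net combinatorics by two standard lemmas that you must still supply and that the paper's Riemann-sum argument deliberately sidesteps: (i) existence of the barycenter of a Borel probability measure on a compact convex set (a finite-intersection argument using separation in $\mathbb{R}^n$ for finitely many functionals), and (ii) that continuous affine self-maps commute with barycenters --- for this you need either that each $\pi_h$ is the restriction of a continuous linear map on $V$, or the uniform density of $\left\{\phi|_Y+c : \phi\in V^*,\ c\in\mathbb{R}\right\}$ in the continuous affine functions on $Y$ (itself a separation argument in $V\times\mathbb{R}$). Both are classical, so this is a matter of filling in known material rather than a gap in the idea.

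Two smaller points. First, you cite the compactness theorem of Section \ref{sec1} to conclude that $\pi_k(G)$ is compact, but that theorem requires the target to be second countable, which the compact convex set $Y$ need not be; the claim is also unnecessary, since compactness of $Y$ itself gives boundedness of $\phi\circ\pi_k$ and keeps the barycenter inside $Y$. Second, your $(2)\Rightarrow(1)$ is essentially the paper's argument: both act by left translation on the dual of the Banach space $C_\sigma(G)$ of Theorem \ref{Banach}, with the evaluation functionals supplying the $\sigma$-continuous orbit map. In fact your choice of the weak$^*$-compact convex set of \emph{means} rather than the paper's full unit ball is an improvement: a fixed point in the unit ball could a priori be the zero functional, which yields no probability measure, whereas a fixed mean restricts on characteristic functions of definable sets directly to a left-invariant Keisler measure.
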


When working in with the definable topology $\sigma$-continuity is equivalent to logic-continuity (see Theorem \ref{continuity}), a concept which
may be more familiar for model theorists. We
state everything in terms of $\sigma$-continuity because it allows for cleaner proofs.

\section{The $\sigma$-topology of a first-order structure}\label{sec1}

In this section we introduce the notion of $\sigma$-space, which
will allow us to consider definable sets in first-order
structures, open sets in topological spaces, and even Borel
measurable sets in a $\sigma$-algebra, as objects of the same
category.

\subsection{The category of $\sigma$-spaces}

\begin{definition}
Let $X$ be a non-empty set, we say that $\tau\subset \mathcal{P}(X)$ is a {\it $\sigma$-topology} on $X$ if:
\begin{enumerate}
\item $\emptyset$, $X$ are in $\tau$.
\item $\tau$ is closed under finite intersections and {\it countable} unions.
\end{enumerate}

We say that $(X,\tau)$ is a {\it $\sigma$-space} if $\tau$ is a $\sigma$-topology on $X$.
\end{definition}

The definitions of {\it $\sigma$-open}, {\it $\sigma$-closed} and  {\it $\sigma$-continuity} are the natural ones.

\begin{exm}
$\left.\right.$

\begin{itemize}

\item Any topological space is a $\sigma$-space.

\item Any $\sigma$-algebra is a $\sigma$-space (although here there is no distinction between $\sigma$-open and $\sigma$-closed sets).

\end{itemize}
\end{exm}

\begin{dfn}
A collection $B$ of subsets of $X$ is a basis for a
$\sigma$-topology $\tau$ if every $U\in \tau$ is a countable union
of sets in $B$.
\end{dfn}

\begin{rmr}
The family $B$ is a basis for some $\sigma$-topology if and only if it is closed under finite intersections.
\end{rmr}

Let $M$ be any structure. If $X$ is an $A$-definable set in a
structure $M$, then $\Def_A(X)$ satisfies the conditions of the
previous remark and therefore is a basis for a $\sigma$-topology
on $X$.

From now on, we will consider any structure $M$ as a $\sigma$-space, where the $\sigma$-topology is generated by $\Def_M(M)$.

\begin{exm}
Let $\mathbb{R}^*$ be the non-standard reals (which is an $\omega_1$ saturated extension of $\mathbb{R}$) and let $st:\mathbb{R}^*\to \mathbb{R}\cap\{-\infty,\infty\}$ the standard map. Then $st$ is $\sigma$-continuous (seeing $\mathbb{R}^*$ as a first-order structure and $\mathbb{R}\cap\{-\infty,\infty\}$ as the usual topological space). Notice that the image of $\mathbb{R}^*$ is compact.
\end{exm}

\begin{rmr}
Any structure $M$ can be seen as a subset of its (Stone space)
space of types $S(M)$ (the Stone space of $\Def_M(M)$), where the
inclusion $i$ is given by $i(m)=tp(m/M)$. The topology of $S(M)$
is compact but is discrete when restricted to $M$, therefore the
inclusion is not $\sigma$-continuous in general.
\end{rmr}

We are interested in treating first-order structures as $\sigma$-spaces. The next definition will characterize saturation in terms of the $\sigma$-topology (this is merely a translation, but it would be very useful in order to characterize the image of saturated structures under $\sigma$-continuous functions).

\begin{definition}
We say that a $\sigma$-topology $(X,\tau)$ is {\it countably compact} if every countable open cover of $X$ has a finite subcover.
\end{definition}

By definition of saturation, we have the following:

\begin{rmr}
A structure $M$ is $\omega_1$-saturated if and only if it is countably compact as a $\sigma$-space.
\end{rmr}

The image of a countably compact space under a $\sigma$-continuous function is countably compact. Therefore we have the following result.

\begin{theorem}
If $X$ is countably compact, $Y$ is a second-countable topological space and $f:X\to Y$ is $\sigma$-continuous, then $f(X)$ is compact in $Y$.

\end{theorem}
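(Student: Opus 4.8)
The plan is to split the argument into two independent steps: first show that $f(X)$ is countably compact as a topological subspace of $Y$, and then upgrade countable compactness to genuine compactness using second-countability of $Y$.

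For the first step I would simply transport covers back along $f$. Let $\{V_n : n\in\mathbb{N}\}$ be any countable cover of $f(X)$ by open subsets of $Y$. Since $f$ is $\sigma$-continuous, each preimage $U_n:=f^{-1}(V_n)$ is $\sigma$-open in $X$, and since the $V_n$ cover $f(X)$ the family $\{U_n\}$ is a countable $\sigma$-open cover of $X$. Countable compactness of $X$ then yields a finite subfamily $U_{n_1},\dots,U_{n_k}$ covering $X$; applying $f$ shows that $V_{n_1},\dots,V_{n_k}$ cover $f(X)$. Hence $f(X)$ is countably compact. This is exactly the content of the remark preceding the statement, that $\sigma$-continuous images of countably compact $\sigma$-spaces are countably compact.

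For the second step I would invoke the standard fact that a second-countable space is Lindel\"of, i.e.\ every open cover admits a countable subcover. As a subspace of the second-countable space $Y$, the set $f(X)$ is itself second-countable, hence Lindel\"of. Given an arbitrary open cover of $f(X)$, I would first extract a countable subcover by the Lindel\"of property, and then extract a finite subcover from it using the countable compactness established in the first step. Composing the two reductions produces a finite subcover of the original cover, so $f(X)$ is compact.

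The only real subtlety here is conceptual rather than technical: $\sigma$-continuity interacts only with countable unions, so on its own it can never convert an arbitrary open cover into something the countable compactness of $X$ can digest. Second-countability of $Y$ is precisely what bridges this gap, since it forces every open cover of $f(X)$ to reduce to a countable one before countable compactness is brought to bear. I expect no computational difficulty; the only point to confirm is that pulling back open subsets of $Y$ (equivalently, that the corestriction $f:X\to f(X)$ remains $\sigma$-continuous for the subspace topology) is legitimate, which is immediate because open subsets of $f(X)$ are traces of open subsets of $Y$.
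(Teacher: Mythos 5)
Your proposal is correct and follows exactly the route the paper intends: the remark preceding the theorem supplies your first step (the $\sigma$-continuous image of a countably compact $\sigma$-space is countably compact), and the paper implicitly uses second-countability via the Lindel\"of property to upgrade to compactness, just as you do. Your write-up merely makes explicit the details the paper leaves unstated, so there is nothing to fix.
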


\subsection{Definable Functions}
We remark that the notion of {$\sigma$-continuity} has some similarities with the notion of {\it definable function} given in \cite{Gi}:
\begin{definition}[Gismatullin, Penazzi, Pillay]
Let $M$ be any structure, let $X$ be an $M$-definable set in a saturated extension $M^*$ and $C$ a compact space. A function $f:X\to C$ is {\it definable} if for any closed set $D$, its pre-image is type-definable over $M$.
\end{definition}

It may be worth to clarify how they differ from each other.

Notice that the pre-image of closed sets by {\it definable} functions are sets that are type-definable over some small model $M$, while the pre-image of closed sets by $\sigma$-continuous functions are type-definable over the whole model, but only with countably many formulas. It is clear then that $tp:X\to S_M(X)$ is a definable function but it is not $\sigma$-continuous in general.

Our definition comes from the fact that we need to deal with the $\sigma$-algebra generated by {\it all} the definable sets of a group, so we cannot relativize to a small model. On the other hand, the topology generated by the definable sets is the discrete one and the notion of {\it definable} function, without relativizing to a small model, is no longer useful. Therefore, we need to take a weak notion of topology (i.e $\sigma$-topologies) and to study $\sigma$-continuity instead.

\subsection{Measures}\label{sec2}

The $\sigma$-algebra generated by the definable sets of a structure $M$ is precisely the {\it Borel} subsets of $M$ when is seen as a $\sigma$-space.

\begin{definition}
Let $M$ be any structure, a {\it Keisler measure on $M$} is a finitely-additive probability measure $\mu_0$ over $\Def(M)$.
\end{definition}

We recall here the Hahn-Kolmogorov Theorem, which will allow us to extend $\mu_0$ to a $\sigma$-additive measure over the $\sigma$-algebra generated by $\Def(M)$.

\begin{theorem}[Hahn-Kolmogorov] If $X$ has a finitely-additive probability measure $\mu_0$ over an algebra of subsets $B$, such that:

For every countable family $\{D_i\}_{i<\omega}$ of disjoint subsets in $B$ such that $\bigcup_{i<\omega} D_i \in B$, we have that $\mu_0(\bigcup_{i<\omega} D_i) = \sum_{i<\omega} \mu_0(D_i)$.

Then, there is a unique measure $\mu$ over the $\sigma$-algebra generated by $B$ that extends $\mu_0$.

\end{theorem}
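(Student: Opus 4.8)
The plan is to follow the classical Carath\'eodory extension argument, specialized to the finite (probability) setting. First I would define an outer measure $\mu^*$ on all of $\mathcal{P}(X)$ by
$$\mu^*(E) = \inf\left\{ \sum_{i<\omega} \mu_0(B_i) : E \subseteq \bigcup_{i<\omega} B_i,\ B_i \in B \right\},$$
where the infimum ranges over all countable covers of $E$ by members of $B$; such covers exist because $X\in B$. A routine check shows $\mu^*(\emptyset)=0$, that $\mu^*$ is monotone, and that it is countably subadditive, so $\mu^*$ is an outer measure on $X$.

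Next I would invoke Carath\'eodory's criterion: call $E\subseteq X$ measurable if $\mu^*(A)=\mu^*(A\cap E)+\mu^*(A\setminus E)$ for every $A\subseteq X$. The collection $\mathcal{M}$ of measurable sets is a $\sigma$-algebra on which $\mu^*$ restricts to a $\sigma$-additive measure; this is the general Carath\'eodory theorem, whose proof uses only the outer-measure axioms and nothing specific about $B$. It then remains to verify that $B\subseteq\mathcal{M}$ and that $\mu^*$ agrees with $\mu_0$ on $B$. Granting these, $\mathcal{M}$ is a $\sigma$-algebra containing $B$, hence containing $\sigma(B)$, and $\mu:=\mu^*|_{\sigma(B)}$ is a measure extending $\mu_0$.

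The step where the hypothesis on $\mu_0$ is genuinely used, and which I expect to be the main obstacle, is the equality $\mu^*=\mu_0$ on $B$. The inequality $\mu^*(A)\le\mu_0(A)$ for $A\in B$ is immediate from the single-set cover $\{A\}$. For the reverse, given any cover $A\subseteq\bigcup_i B_i$ with $B_i\in B$, I would intersect with $A$ and disjointify inside the algebra, setting $D_i=(A\cap B_i)\setminus\bigcup_{j<i}(A\cap B_j)\in B$, so that the $D_i$ are pairwise disjoint with $\bigcup_i D_i=A\in B$. The countable-additivity hypothesis then gives $\mu_0(A)=\sum_i\mu_0(D_i)\le\sum_i\mu_0(B_i)$, and taking the infimum over covers yields $\mu_0(A)\le\mu^*(A)$. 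The inclusion $B\subseteq\mathcal{M}$ is comparatively routine: for $E\in B$ and arbitrary $A$, covering $A$ efficiently and splitting each cover set along $E$ and its complement gives $\mu^*(A)\ge\mu^*(A\cap E)+\mu^*(A\setminus E)$, the reverse inequality being subadditivity.

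Finally, for uniqueness I would use that $\mu_0$ is a probability measure, so any extension is a finite measure of total mass $\mu_0(X)=1$. Since $B$ is an algebra, hence a $\pi$-system generating $\sigma(B)$, and any two extensions agree on $B$ (in particular on $X$), the Dynkin $\pi$--$\lambda$ theorem shows they agree on all of $\sigma(B)$. This establishes uniqueness and completes the proof.
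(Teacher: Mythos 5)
Your proposal is correct, but there is nothing in the paper to compare it against: the paper explicitly \emph{recalls} Hahn--Kolmogorov as a classical result and gives no proof, using it as a black box in Theorem \ref{extension}. Your outline is the standard Carath\'eodory extension argument, and the two delicate points are handled properly. First, the agreement $\mu^*=\mu_0$ on $B$: your disjointification $D_i=(A\cap B_i)\setminus\bigcup_{j<i}(A\cap B_j)$ stays inside the algebra, is pairwise disjoint, and satisfies $\bigcup_i D_i=A\in B$, which is precisely the configuration in which the stated countable-additivity hypothesis applies; combined with monotonicity of $\mu_0$ on $B$ (a consequence of finite additivity and nonnegativity, via $\mu_0(B_i)=\mu_0(D_i)+\mu_0(B_i\setminus D_i)$), this gives $\mu_0(A)\le\sum_i\mu_0(B_i)$ as you claim. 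Second, uniqueness: your appeal to the $\pi$--$\lambda$ theorem is legitimate exactly because $\mu_0$ is a probability measure, so any two extensions are finite with the same total mass and the class on which they agree is a $\lambda$-system containing the $\pi$-system $B$; for $\sigma$-finite or infinite premeasures this step would need more care, but here it is fine. One incidental remark worth noting: in the paper's actual application the hypothesis of the theorem is verified trivially, since in an $\omega_1$-saturated structure a definable set cannot be a disjoint union of infinitely many nonempty definable sets, so only finitely many $D_i$ are nonempty and countable additivity on $\Def(M)$ reduces to finite additivity---but that concerns Theorem \ref{extension}, not the extension theorem itself, whose full proof you have correctly supplied.
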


\begin{theorem}\label{extension}
Every Keisler measure $\mu_0$ over an $\omega_1$-saturated
structure $M$ can be extended to a $\sigma$-additive measure over
the $\sigma$-algebra generated by $\Def(M)$.
\end{theorem}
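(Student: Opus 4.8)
The plan is to apply the Hahn-Kolmogorov Theorem directly, so the entire task reduces to verifying its countable-additivity hypothesis for $\mu_0$ on the algebra $\mathrm{Def}(M)$. First I would note that $\mathrm{Def}(M)$ is indeed a Boolean algebra of subsets of $M$ (closed under finite unions, intersections, and complements), and that $\mu_0$ is by definition a finitely-additive probability measure on it, so the only thing left to check is the premultimeasure condition: whenever $\{D_i\}_{i<\omega}$ is a family of pairwise disjoint definable sets whose union $\bigcup_{i<\omega} D_i$ happens to again be definable, we must have $\mu_0\bigl(\bigcup_{i<\omega} D_i\bigr) = \sum_{i<\omega} \mu_0(D_i)$.

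The key observation, and the place where $\omega_1$-saturation does all the work, is that such an infinite disjoint union can never be genuinely infinite. Suppose $D := \bigcup_{i<\omega} D_i$ is definable and the $D_i$ are disjoint and definable. Consider the countable family of definable (hence $\sigma$-open) sets consisting of the complements $D \setminus D_i$ together with the sets $D_i$ arranged appropriately; more directly, the sets $\{D \setminus \bigcup_{j<n} D_j\}_{n<\omega}$ form a descending chain of definable subsets of $D$ with empty intersection. By the earlier remark that $M$ is $\omega_1$-saturated if and only if it is countably compact as a $\sigma$-space, this descending chain of $\sigma$-closed-in-$D$ sets with empty intersection must already stabilize at $\emptyset$ after finitely many steps: otherwise the complements would form a countable $\sigma$-open cover of $D$ (or of $M$) with no finite subcover, contradicting countable compactness. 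Hence $D \setminus \bigcup_{j<n} D_j = \emptyset$ for some $n$, which forces $D_i = \emptyset$ for all $i \geq n$.

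Once we know all but finitely many $D_i$ are empty, countable additivity collapses to finite additivity: $\mu_0\bigl(\bigcup_{i<\omega} D_i\bigr) = \mu_0\bigl(\bigcup_{i<n} D_i\bigr) = \sum_{i<n}\mu_0(D_i) = \sum_{i<\omega}\mu_0(D_i)$, where the middle equality is just the finite additivity of $\mu_0$ and the last holds because the remaining terms vanish. This verifies the hypothesis of Hahn-Kolmogorov, and the theorem then yields a unique $\sigma$-additive extension $\mu$ of $\mu_0$ to the $\sigma$-algebra generated by $\mathrm{Def}(M)$, completing the proof.

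The main obstacle, and really the only substantive step, is the compactness argument showing that an infinite disjoint definable union forces cofinitely many pieces to be empty; everything else is bookkeeping. I would be careful to phrase this correctly in terms of the $\sigma$-topology: the relevant cover is a \emph{countable} open cover (the $D_i$ are $\sigma$-open, being definable), so countable compactness, rather than full compactness, is exactly the right tool, and this is precisely why the $\omega_1$-saturation hypothesis cannot be dropped. One should double-check that the relativization of the $\sigma$-topology to the definable set $D$ is again countably compact, which follows since $D$ is $\sigma$-closed in the countably compact space $M$.
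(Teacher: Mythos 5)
Your proof is correct and follows essentially the same route as the paper: the paper likewise reduces the theorem to verifying the Hahn--Kolmogorov hypothesis and then notes that $\omega_1$-saturation forces all but finitely many of the disjoint $D_i$ to be empty. Your compactness argument via the descending chain $D \setminus \bigcup_{j<n} D_j$ simply spells out the saturation step that the paper leaves implicit, including the (correct) observation that the definable set $D$ inherits countable compactness from $M$.
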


\begin{proof}

We need to check that $(M,\Def(M),\mu_0)$ satisfies the hypothesis
of the Hahn-Kolmogorov Theorem: Let $\{D_i\}_{i<\omega}$ be a
family of disjoint subsets in $\Def(M)$ such that
$\bigcup_{i<\omega} D_i = D \in \Def(M).$ By $\omega_1$-saturation
only a finite number of $D_i$'s are non empty, hence the
conclusion follows.

\end{proof}

Let us note that if $M$ is $\omega_1$-saturated and has a Keisler
measure, then it may be seen as a measurable space, where the
$\sigma$-algebra is generated by the definable sets. Moreover, if
$X$ is a topological space with a Borel-measure, then every
$\sigma$-continuous function $f:M\to X$ is measurable in the usual
sense.

\subsection{Polish spaces and $\sigma$-continuity}

We establish several characterizations of $\sigma$-continuity over
Polish spaces. Recall that a Polish space may be characterized as
a second-countable locally-compact Hausdorff space.

\begin{theorem}\label{continuity}
Let $f:M\to X$ be a function from an $\omega_1$-saturated structure $M$ to a Polish-space $X$. The following are equivalent:
\begin{itemize}
\item $f$ is $\sigma$-continuous.
\item For every $K\subset U$ in $X$, with $K$ compact and $U$ open, there exists a definable set $D$ such that \[f^{-1}(K)\subset D \subset f^{-1}(U).\]
\item For every $x\in X$ and for every open $U$ containing $x$, there exists $K\subset U$ a compact
neighborhood of $x$ and $D$ definable such that
\[f^{-1}(K)\subset D\subset f^{-1}(U).\]
\end{itemize}
\end{theorem}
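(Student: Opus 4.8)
The plan is to prove the three conditions equivalent by a cyclic chain of implications, using the saturation hypothesis (equivalently, countable compactness of $M$ as a $\sigma$-space) at the crucial point. Throughout, the key structural fact is that the $\sigma$-topology on $M$ has the definable sets $\Def(M)$ as a basis closed under finite intersection, so a $\sigma$-open set is exactly a \emph{countable} union of definable sets; a function into $X$ is $\sigma$-continuous precisely when the preimage of every open set is $\sigma$-open, i.e.\ a countable union of definable sets.

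First I would prove that $\sigma$-continuity implies the second condition. Given $K \subset U$ with $K$ compact and $U$ open, $\sigma$-continuity gives that $f^{-1}(U)$ is $\sigma$-open, hence $f^{-1}(U) = \bigcup_{i<\omega} D_i$ for definable sets $D_i$. The aim is to capture $f^{-1}(K)$ inside a single definable set still contained in $f^{-1}(U)$. Here is where saturation enters: since $X$ is Polish (locally compact Hausdorff, second countable), I can find an open $V$ with $K \subset V \subset \overline{V} \subset U$ and $\overline{V}$ compact; then refine so that $f^{-1}(U)$ is covered by countably many definable sets each of whose $f$-image is relatively compact and contained in $U$. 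The compact set $K$ is covered by finitely many basic opens whose preimages are definable, and countable compactness of $M$ lets me reduce an a~priori countable cover of the relevant definable piece to a finite one, yielding a single definable $D$ (the finite union) with $f^{-1}(K) \subset D \subset f^{-1}(U)$. The precise bookkeeping of which countable cover to make countably compact is the delicate step.

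Next, the second condition trivially implies the third: given $x$ and an open $U \ni x$, local compactness of the Polish space $X$ furnishes a compact neighborhood $K$ of $x$ with $K \subset U$, and then the second condition supplies the definable $D$ with $f^{-1}(K) \subset D \subset f^{-1}(U)$. Finally I would close the loop by showing the third condition implies $\sigma$-continuity. Fix any open $U \subset X$; I must show $f^{-1}(U)$ is a countable union of definable sets. For each $x \in U$ the third condition gives a compact neighborhood $K_x \subset U$ and definable $D_x$ with $f^{-1}(K_x) \subset D_x \subset f^{-1}(U)$. Using second countability of $X$, the open cover $\{\mathrm{int}(K_x) : x \in U\}$ of $U$ admits a countable subcover indexed by some $\{x_n\}_{n<\omega}$, so $U = \bigcup_n \mathrm{int}(K_{x_n}) \subset \bigcup_n K_{x_n}$. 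Taking preimages, $f^{-1}(U) = \bigcup_n f^{-1}(\mathrm{int}(K_{x_n})) \subset \bigcup_n f^{-1}(K_{x_n}) \subset \bigcup_n D_{x_n} \subset f^{-1}(U)$, and since $f^{-1}(U) \supset \bigcup_n D_{x_n}$ forces equality, $f^{-1}(U) = \bigcup_n D_{x_n}$ is a countable union of definable sets, hence $\sigma$-open.

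I expect the main obstacle to be the first implication, specifically extracting a \emph{single definable} set $D$ sandwiched between $f^{-1}(K)$ and $f^{-1}(U)$. The naive covering of $f^{-1}(U)$ by definables $D_i$ does not interact with $K$ directly, so the argument must pass through the topology of $X$: one shrinks $U$ to a relatively compact neighborhood, pulls back a finite open cover of $K$ to get a candidate definable set, and then invokes $\omega_1$-saturation (countable compactness) to control the boundary terms and guarantee containment in $f^{-1}(U)$. Organizing this so that the finite subcover genuinely traps $f^{-1}(K)$ while avoiding points mapped outside $U$ is the heart of the proof; the other two implications are formal consequences of second countability and local compactness.
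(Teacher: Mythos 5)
Your overall architecture --- the cycle (1)$\Rightarrow$(2)$\Rightarrow$(3)$\Rightarrow$(1) --- matches the paper exactly, and your (2)$\Rightarrow$(3) (local compactness of the Polish space) and (3)$\Rightarrow$(1) (Lindel\"of via second countability applied to $\{\mathrm{int}(K_x)\}$, then the sandwich $f^{-1}(U)=\bigcup_n D_{x_n}$) are correct and essentially identical to the paper's. The genuine gap is in (1)$\Rightarrow$(2), which you yourself flag as unresolved (``the precise bookkeeping of which countable cover to make countably compact is the delicate step''). The one missing idea is to apply $\sigma$-continuity to the \emph{complement} of $K$: since $K$ is closed in $X$, the set $M\setminus f^{-1}(K)=f^{-1}(X\setminus K)$ is $\sigma$-open, say $\bigcup_{i<\omega}E_i$ with each $E_i$ definable, while $f^{-1}(U)=\bigcup_{j<\omega}D_j$. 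Then $\{E_i\}_{i<\omega}\cup\{D_j\}_{j<\omega}$ is a countable definable cover of all of $M$ (this matters: countable compactness as defined applies to covers of the whole space, which is why one must pass to the complement), so $\omega_1$-saturation yields a finite subcover, and $D:=$ the union of the finitely many $D_j$'s that occur satisfies $f^{-1}(K)\subset D\subset f^{-1}(U)$, because $f^{-1}(K)$ meets no $E_i$. Equivalently: $f^{-1}(K)$ is $\sigma$-closed, hence countably compact as a subspace, so finitely many of the $D_j$ already cover it. This is exactly the paper's (terse) argument: ``$f^{-1}(K)$ is $\sigma$-closed and $f^{-1}(U)$ is $\sigma$-open; by compactness and $\omega_1$-saturation there is a definable set in between.''

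The route you sketch instead cannot be completed as written. The shrinking $K\subset V\subset\overline{V}\subset U$ and the insistence that the definable pieces have relatively compact image buy nothing, because the obstruction lives on the $M$ side, not the $X$ side; and the assertion that ``the compact set $K$ is covered by finitely many basic opens whose preimages are definable'' is false --- preimages under $f$ of open sets are only $\sigma$-open (countable unions of definables), not definable in general, which is precisely why a finite open cover of $K$ in $X$ cannot be pulled back to a definable set directly. Once you replace that step with the observation that $f^{-1}(K)$ is $\sigma$-closed, the implication takes three lines and needs no topology of $X$ beyond the fact that $K$ is closed; in particular, local compactness is genuinely needed only for (2)$\Rightarrow$(3), where you already use it correctly.
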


\begin{proof}
(1) $\Rightarrow$ (2): By $\sigma$-continuity, $f^{-1}(K)$ is $\sigma$-closed and $f^{-1}(U)$ is $\sigma$-open. Therefore, by compactness and $\omega_1$-saturation there must be a definable set in between.

(2) $\Rightarrow$ (3): This is due to the local-compactness of $X$.

(3) $\Rightarrow$ (1): Let $U$ be an open set. For every $x\in U$, we can find a neighborhood $K_i\subset U$ and $D_x$ definable such that $f^{-1}(K_x)\subset D_x\subset f^{-1}(U)$. Notice that $\{K_x\}$ is a cover of $U$ so we may find a countable subcover $\{K_i\}_{i<\omega}$ of $U$ (this cover exists because $X$ is Polish). By hypothesis, for every $i$ there exists $D_i$ definable such that $f^{-1}(K_i)\subset D_i \subset f^{-1}(U)$.

Clearly $f^{-1}(U)=\bigcup_{i<\omega} D_i$. \end{proof}

\begin{lemma}\label{addition}
If $f_1,f_2: M\to \mathbb{R}$ are $\sigma$-continuous, then both $(f_1,f_2):\G\to\mathbb{R}^2$  and $f_1+f_2$ are $\sigma$-continuous.
\end{lemma}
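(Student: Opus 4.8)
The plan is to prove Lemma \ref{addition} by reducing the two claims to the characterization of $\sigma$-continuity furnished by Theorem \ref{continuity}, specifically the equivalence of (1) with condition (2). First I would observe that it suffices to handle the pairing map $(f_1,f_2)\colon M\to\mathbb{R}^2$, since addition $+\colon\mathbb{R}^2\to\mathbb{R}$ is an ordinary continuous map between topological spaces, and the composition of a $\sigma$-continuous function with a continuous function is $\sigma$-continuous (this follows immediately from the definitions, as the preimage of an open set is open and preimages commute with composition). Thus $f_1+f_2 = {+}\circ(f_1,f_2)$ will be $\sigma$-continuous once the pairing is, and I would state this composition fact explicitly before invoking it.

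For the pairing map, I would verify condition (2) of Theorem \ref{continuity}. Let $K\subset U$ in $\mathbb{R}^2$ with $K$ compact and $U$ open. The key geometric step is to sandwich the pair $(K,U)$ between products of one-dimensional compact and open sets. Since $K$ is compact, I would choose compact intervals (or boxes) $K_1,K_2\subset\mathbb{R}$ with $K\subset K_1\times K_2$, and since $U$ is open containing the compact set $K_1\times K_2$ is not automatic, so instead I would work more carefully: using compactness of $K$ and a tube/box argument, cover $K$ by finitely many open boxes whose closures lie in $U$, take $K_1,K_2$ to be compact sets and $U_1,U_2$ open sets in each coordinate so that
\[
K\subset K_1\times K_2 \subset U_1\times U_2 \subset U.
\]
Applying the hypothesis that each $f_j$ is $\sigma$-continuous, Theorem \ref{continuity}(2) yields definable sets $D_j$ with $f_j^{-1}(K_j)\subset D_j\subset f_j^{-1}(U_j)$. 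Then $D:=D_1\cap D_2$ is definable, and a direct preimage computation gives
\[
(f_1,f_2)^{-1}(K)\subset (f_1,f_2)^{-1}(K_1\times K_2)\subset D_1\cap D_2 \subset (f_1,f_2)^{-1}(U_1\times U_2)\subset (f_1,f_2)^{-1}(U),
\]
which is exactly condition (2) for the pairing map.

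The main obstacle I anticipate is the purely topological sandwiching step: producing product sets $K_1\times K_2$ and $U_1\times U_2$ squeezed between $K$ and $U$. This requires a compactness argument in $\mathbb{R}^2$ (covering $K$ by finitely many open rectangles with closures inside $U$ and then consolidating them into single coordinate-wise compact and open sets), rather than anything model-theoretic. Everything involving the structure $M$ reduces cleanly to two invocations of Theorem \ref{continuity} together with the fact that definable sets are closed under finite intersection. I would also note that the argument generalizes verbatim to show that $(f_1,\dots,f_n)\colon M\to\mathbb{R}^n$ is $\sigma$-continuous whenever each $f_i$ is, and hence that the class of $\sigma$-continuous real-valued functions on $M$ is closed under all continuous operations $\mathbb{R}^n\to\mathbb{R}$, which is the form in which this lemma will be used later.
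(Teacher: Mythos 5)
Your overall strategy---handle the pairing map $(f_1,f_2)$ first and then obtain $f_1+f_2$ by composing with the continuous map $+\colon\mathbb{R}^2\to\mathbb{R}$---is exactly the paper's, but your verification of $\sigma$-continuity of the pairing contains a step that is false. You set out to check condition (2) of Theorem \ref{continuity} and claim that, given compact $K\subset U\subset\mathbb{R}^2$ with $U$ open, a finite cover of $K$ by rectangles can be consolidated into a \emph{single} product sandwich $K\subset K_1\times K_2\subset U_1\times U_2\subset U$. No such sandwich exists in general: take $K=\{(t,t):t\in[0,1]\}$ the diagonal segment and $U=\{(x,y):|x-y|<1/10\}$. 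Any product $K_1\times K_2\supseteq K$ forces $[0,1]\subseteq K_1$ and $[0,1]\subseteq K_2$, hence $K_1\times K_2\supseteq[0,1]^2$, which is not contained in $U$ since $(0,1)\in[0,1]^2$. Consolidating finitely many rectangles ``coordinate-wise'' produces the product of the coordinate unions, which is strictly larger than the union of the rectangles, so the tube/box argument you sketch cannot close this gap.

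Two easy repairs. The paper sidesteps the issue entirely by verifying condition (3) of Theorem \ref{continuity} rather than (2): there one only needs, for a \emph{single point} $\bar{x}=(x_1,x_2)\in U$, a compact neighborhood of $\bar{x}$ inside $U$, and products $K_1\times K_2\subset U_1\times U_2\subset U$ with each $K_i$ a compact neighborhood of $x_i$ do form a neighborhood basis at $\bar{x}$ in $\mathbb{R}^2$; then $D_1\cap D_2$ witnesses (3), with $D_i$ supplied by the $\sigma$-continuity of $f_i$. Alternatively, you can salvage your condition-(2) route simply by \emph{not} consolidating: cover $K$ by finitely many products $K_1^{(j)}\times K_2^{(j)}\subset U_1^{(j)}\times U_2^{(j)}\subset U$, obtain a definable $D^{(j)}=D_1^{(j)}\cap D_2^{(j)}$ for each $j$, and take $D=\bigcup_j D^{(j)}$, a finite union of definable sets and hence definable; this $D$ satisfies $(f_1,f_2)^{-1}(K)\subset D\subset (f_1,f_2)^{-1}(U)$, since nothing in condition (2) requires $D$ to come from a single box. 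Your reduction of $f_1+f_2$ to the pairing via composition with $+$, and your closing remark that the argument generalizes to $(f_1,\dots,f_n)$ and arbitrary continuous operations $\mathbb{R}^n\to\mathbb{R}$, are both correct once the pairing step is repaired.
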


\begin{proof}

Let $U$ be an open set of $\mathbb{R}^2$ and let $\bar{x}=(x_1,x_2)\in U$. Take any box $U_1\times U_2$ of open sets such that $\bar{x}\in U_1\times U_2 \subset U$ and take $K_1\times K_2$ a box of compact sets where $K_i\subset U_i$ is a neighbourhood of $x_i$. By $\sigma$-continuity of $f_1$ and $f_2$ we can find $D_1$ and $D_2$ such that $f_1^{-1}(K_i)\subset D_i\subset f_2^{-1}(U_i).$ Therefore, $$(f_1,f_2)^{-1}(K_1\times K_2)\subset D_1\cap D_2\subset (f_1,f_2)^{-1}(U_1\times U_2)$$ and therefore $(f_1,f_2):\G\to\mathbb{R}^2$ is $\sigma$-continuous.

Now, notice that $$f_1+f_2: M \xrightarrow{(f_1,f_2)} \mathbb{R}^2\xrightarrow{+}\mathbb{R}.$$ Since the composition of $\sigma$-continuous functions is $\sigma$-continuous, we have that $f_1+f_2$ is $\sigma$-continuous.
\end{proof}

\begin{rmr}
If $D$ is a definable set, then its characteristic function
\[{\bf 1}_D:M\to \mathbb{R}\] is $\sigma$-continuous.
\end{rmr}

\begin{theorem}\label{Banach}
Let $M$ be a saturated structure. Then the space $C_{\sigma}(M)$ of $\sigma$-continuous
functions of $M$ over $\mathbb{R}$ equipped with the supremum norm is a Banach space.
\end{theorem}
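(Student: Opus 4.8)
The plan is to show $C_\sigma(M)$ is a closed subspace of the Banach space $B(M)$ of all bounded real-valued functions on $M$ with the supremum norm. Since $B(M)$ is complete, it suffices to verify two things: first, that $C_\sigma(M)$ is a linear subspace of $B(M)$ (in particular, that every $\sigma$-continuous function is bounded); and second, that $C_\sigma(M)$ is closed under uniform limits. Completeness of $C_\sigma(M)$ then follows, since a closed subspace of a Banach space is itself a Banach space.

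**Boundedness and linearity.** First I would check that each $\sigma$-continuous $f:M\to\mathbb{R}$ is bounded. This is where saturation enters: by Theorem \ref{continuity} (or directly), $f$ is the $\sigma$-continuous image of the countably compact space $M$, and by Theorem \ref{Banach}'s companion result on images of countably compact spaces, $f(M)$ is compact in $\mathbb{R}$, hence bounded. So $C_\sigma(M)\subseteq B(M)$. Linearity is immediate from Lemma \ref{addition}: that lemma gives closure under addition, and closure under scalar multiplication follows the same way (composing $f$ with the $\sigma$-continuous — indeed continuous — map $t\mapsto \lambda t$ on $\mathbb{R}$, using that composition of $\sigma$-continuous functions is $\sigma$-continuous). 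The zero function is trivially $\sigma$-continuous. Thus $C_\sigma(M)$ is a linear subspace of $B(M)$.

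**Closedness under uniform limits** is the main step. Suppose $f_n\to f$ uniformly with each $f_n\in C_\sigma(M)$; I must show $f\in C_\sigma(M)$. Using the characterization in Theorem \ref{continuity}, it suffices to show that for every compact $K$ and open $U$ in $\mathbb{R}$ with $K\subset U$, there is a definable $D$ with $f^{-1}(K)\subset D\subset f^{-1}(U)$. The key idea is to shrink slightly: since $K$ is compact and $U$ open with $K\subset U$, I can find $\varepsilon>0$ so that the $\varepsilon$-neighborhood $K_\varepsilon=\{x: \mathrm{dist}(x,K)\le \varepsilon\}$ still satisfies $K\subset K_\varepsilon \subset U$, with $K_\varepsilon$ compact. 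Choosing $n$ with $\|f_n-f\|_\infty < \varepsilon/2$, one has the inclusions $f^{-1}(K)\subset f_n^{-1}(K_{\varepsilon/2})$ and $f_n^{-1}(K_{\varepsilon/2})\subset f^{-1}(K_\varepsilon)\subset f^{-1}(U)$, where $K_{\varepsilon/2}$ is compact and sits inside the open set $\mathrm{int}(K_\varepsilon)$. Applying $\sigma$-continuity of $f_n$ to the pair $K_{\varepsilon/2}\subset \mathrm{int}(K_\varepsilon)$ yields a definable $D$ with $f_n^{-1}(K_{\varepsilon/2})\subset D\subset f_n^{-1}(\mathrm{int}(K_\varepsilon))$, and chaining the inclusions gives $f^{-1}(K)\subset D\subset f^{-1}(U)$, as required.

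**The main obstacle** is the uniform-approximation step: one must arrange the $\varepsilon$-bookkeeping so that a \emph{single} definable set produced from one approximant $f_n$ is correctly trapped between the preimages of $K$ and $U$ under the \emph{limit} $f$. The subtlety is that $\sigma$-continuity is phrased with definable sets squeezed between preimages of a compact set and an open set, so the approximation must respect both a compact inner set and an open outer set simultaneously; this is exactly why I pass to the slightly enlarged compact $K_\varepsilon$ and its interior, giving room on both sides for the $\sup$-norm error to be absorbed. Everything else is routine once the characterization from Theorem \ref{continuity} is in hand.
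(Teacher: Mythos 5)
Your route is essentially the paper's. Both proofs reduce completeness to the statement that a uniform limit $f$ of $\sigma$-continuous functions is $\sigma$-continuous, and both establish that statement the same way: pick a \emph{single} approximant $f_n$ within a small sup-norm error, invoke Theorem \ref{continuity} for $f_n$ on a slightly shrunk compact inside a slightly enlarged open set, and absorb the error by the triangle inequality so that the resulting definable $D$ is trapped between preimages under the limit $f$. The paper phrases this via Cauchy sequences and clause (3) of Theorem \ref{continuity}, working with nested balls $\overline{B_{\epsilon/8}(a)}\subset \overline{B_{\epsilon/4}(a)}\subset B_{\epsilon/2}(a)\subset B_{\epsilon}(a)\subset U$; you use clause (2) with $\varepsilon$-neighborhoods of an arbitrary compact $K$, and you package the whole argument as closedness of $C_{\sigma}(M)$ in the Banach space $B(M)$. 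You also make explicit that $\sigma$-continuous functions on an $\omega_1$-saturated structure are bounded (via the theorem that the $\sigma$-continuous image of a countably compact space in a second-countable space is compact), a point the paper leaves implicit in the phrase ``supremum norm.'' These are presentational differences, not a different argument.

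There is, however, a quantitative slip that makes your final chaining fail as written. You arrange only $K_{\varepsilon}\subset U$ and $\lVert f_n-f\rVert_{\infty}<\varepsilon/2$, and you obtain $D$ with $f_n^{-1}(K_{\varepsilon/2})\subset D\subset f_n^{-1}(\mathrm{int}(K_{\varepsilon}))$. The upper inclusion you actually need is $f_n^{-1}(\mathrm{int}(K_{\varepsilon}))\subset f^{-1}(U)$; but $f_n(x)\in K_{\varepsilon}$ only yields $\mathrm{dist}(f(x),K)<\varepsilon+\varepsilon/2=3\varepsilon/2$, which is not controlled by $K_{\varepsilon}\subset U$. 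Your displayed inclusion $f_n^{-1}(K_{\varepsilon/2})\subset f^{-1}(K_{\varepsilon})$ is true but irrelevant here, since $D$ \emph{contains} $f_n^{-1}(K_{\varepsilon/2})$ rather than being contained in it. Concretely, take $K=\{0\}$, $\varepsilon=1$, $U=(-1.1,\,1.1)$ (so $K_{\varepsilon}=[-1,1]\subset U$): a point $x$ with $f_n(x)=0.9$ and $f(x)=1.35$ satisfies the sup-norm bound and is permitted to lie in $D$, yet $f(x)\notin U$ — and $\sigma$-continuity does not let you choose which $D$ you get. The repair is one line: since $K$ is compact and $U$ open, choose $\varepsilon>0$ at the outset with $K_{2\varepsilon}\subset U$; then $D\subset f_n^{-1}(K_{\varepsilon})\subset f^{-1}(K_{3\varepsilon/2})\subset f^{-1}(U)$, and the rest of your argument goes through unchanged. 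This extra slack is exactly what the paper's proof builds in by placing the $\epsilon/8$-ball well inside the $\epsilon$-ball.
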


\begin{proof}
By Lemma \ref{addition} we know that the $\sigma$-continuous functions
form a vector space. We just need to show that Cauchy sequences converge.

Let $\langle f_i\rangle_{i\in \omega}$ be a sequence of
$\sigma$-continuous functions from $M$ to $\mathbb R$ such that for
every $\epsilon$ there is an $N$ such that for all $x\in M$, for
all $i,j>N$ we have $|f_i(x)-f_j(x)|<\epsilon$. Since $\mathbb R$
is complete the function $F(x)=\lim_{i\rightarrow \omega} f_i(x)$ exists.

We need to show that $F$ is $\sigma$-continuous. Let
$U$ be an open set in $\mathbb R$ and take $V:=B_{\epsilon/8}(a)$ such that $B_{\epsilon}(a)\subset U$. By Theorem \ref{continuity}, it is enough to show that there is a definable $D$ such
that $f^{-1}(\overline{V})\subseteq D\subseteq F^{-1}(U)$.

Let $N$ be such that for $i,j\geq N$ we have
$|f_i(x)-f_j(x)|<\epsilon/8$ which implies that
$|f_N(x)-F(x)|\leq \epsilon/8$. By $\sigma$-continuity of $f_N$, let
$D$ be a definable set such that
\[
f_N^{-1}(\overline{B_{\epsilon/4}(a)})\subseteq D \subseteq
f_N^{-1}(B_{\epsilon/2}(a)).
\]

Now, if $x\in D$ we have $f_N(x)\in B_{\epsilon/2}(a)$ and
$|f_N(x)-F(x)|\leq \epsilon/8$ which by triangle inequality
implies $F(x)\in B_{5\epsilon/8}(a)$.

On the other hand, if $F(x)\in \overline{B_{\epsilon/8}(a)}$, again
by triangle inequality we have $f_i(x)\in B_{\epsilon/4}(a)$, hence $x\in D)$. It follows that
\[
F^{-1}(\overline{V})\subseteq D \subseteq F^{-1}(U),
\]
as required.
\end{proof}

\section{Definable amenability and the fixed-point property} \label{sec4}

\begin{definition}
A definable group $\G$ is {\it definably amenable} if it has a left-invariant Keisler measure.
\end{definition}

If $\G$ is an $\omega_1$-saturated definably amenable group, by
Theorem \ref{extension} we know that its Keisler measure $\mu_0$
can be extended to a measure $\mu$ on $\mathcal{B}$, the
$\sigma$-algebra generated by $\Def(M)$.  It is easy to see that
$\mu$ is left-invariant.

Notice that $\mu$ can be seen as a linear functional from
$\mathcal{S}(\G)$ to $\mathbb{R}$, where $\mathcal{S}(\G)$ is the
set of simple measurable functions from $\G$ to $\mathbb{R}$:

\[\mathcal{S}(\G):=\left\{\sum_{i\leq n} a_i {\bf 1}_{B_i}\mid B_i\in \mathbb{B},\, a_i\in \mathbb{R},\, n<\omega \right\}\]

It is well known that $\mathcal{S}(\G)$ is a dense subset of the
Banach space of essentially bounded measurable functions
$L^{\infty}(\G)$.

With this in mind, we can prove now the following theorem:

\begin{theorem}
If $\G$ is an $\omega_1$-saturated definably amenable group, then there exists an invariant mean on $L^{\infty}(\G)$.

\end{theorem}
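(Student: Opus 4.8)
The plan is to construct the invariant mean by extending the left-invariant measure $\mu$ from simple functions to all of $L^{\infty}(\G)$ via continuity, and then verify invariance. First I would define the functional $I_0:\mathcal{S}(\G)\to\mathbb{R}$ on simple functions by $I_0\left(\sum_{i\leq n} a_i {\bf 1}_{B_i}\right) = \sum_{i\leq n} a_i\,\mu(B_i)$; one checks this is well-defined (independent of the representation of a simple function) and linear, using finite additivity of $\mu$. The key estimate is that $I_0$ is bounded with respect to the supremum (essential-sup) norm: since $\mu$ is a probability measure, $|I_0(f)|\leq \|f\|_{\infty}$ for every simple $f$, so $I_0$ has operator norm at most $1$.

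Next, since $\mathcal{S}(\G)$ is dense in $L^{\infty}(\G)$ (as recalled just above the statement) and $I_0$ is a bounded linear functional on this dense subspace of the Banach space $L^{\infty}(\G)$, I would invoke the standard extension-by-continuity theorem (the bounded linear transformation theorem) to obtain a unique bounded linear functional $I:L^{\infty}(\G)\to\mathbb{R}$ extending $I_0$, still of norm at most $1$. To see that $I$ is a \emph{mean}, I must check positivity and normalization: $I({\bf 1}_{\G}) = \mu(\G) = 1$, and for $f\geq 0$ one has $I(f)\geq 0$, which follows because nonnegative simple functions have nonnegative integral and the positive cone is preserved under the relevant limits; combined with norm $\leq 1$ this forces $I$ to be a positive normalized functional, i.e.\ a mean.

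Finally I would establish left-invariance. For $g\in\G$ and a basic function ${\bf 1}_B$ one has the left-translate $(g\cdot{\bf 1}_B)(x) = {\bf 1}_B(g^{-1}x) = {\bf 1}_{gB}(x)$, and left-invariance of $\mu$ gives $\mu(gB)=\mu(B)$, so $I_0$ is invariant on simple functions; by density and continuity of both $I$ and the translation action on $L^{\infty}(\G)$, invariance passes to all of $L^{\infty}(\G)$.

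I expect the main obstacle to be the verification that $\mathcal{B}$, and hence the space of simple functions, is genuinely closed under the left translation action so that the translated simple functions remain in $\mathcal{S}(\G)$ and the integral is preserved — this is where one uses that $gB$ is again in the $\sigma$-algebra generated by $\Def(\G)$ (because left-multiplication by $g$ is a definable bijection, hence Borel for the $\sigma$-topology) together with the left-invariance of $\mu$ recorded in the paragraph preceding the theorem. The density and extension arguments are then routine functional analysis.
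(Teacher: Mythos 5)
Your proposal is correct and follows essentially the same route as the paper: extend the functional from the dense subspace $\mathcal{S}(\G)$ of simple functions to $L^{\infty}(\G)$ by continuity, note $\mu'({\bf 1}_{\G})=1$, and deduce invariance by approximating $f$ with a simple $f'$ and using invariance on simple functions (the paper does this via the triangle-inequality estimate $|\mu'(_gf-f)|\leq 2\|f-f'\|<\epsilon$, which is exactly your density-and-continuity step made explicit). The extra details you supply --- well-definedness of $I_0$, positivity of the extension, and the fact that $gB\in\mathcal{B}$ because left translation is a definable bijection --- are points the paper leaves implicit, and your treatment of them is sound.
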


\begin{proof}
Since $\mathcal{S}(\G)$ is dense in $L^{\infty}(\G)$, the left-invariant mean $\mu\in \mathcal{S}(\G)$ has a unique extension to a mean $\mu'\in L^{\infty}(\G)$. Notice that $\mu'({\bf 1}_{\G})=1$.  It only remains to show that $\mu'$ is invariant:

Let $f\in L^{\infty}(\G)$ and $g\in G$. By density, for every $\epsilon > 0$ there exists $f' \in \mathcal{S}(\G)$ such that $|f-f'|<\epsilon/2$. Therefore
\[|\mu'(_gf-f)|\leq |\mu'(_g(f-f'))|+|\mu'(_g f'-f')|+|\mu'(f'-f)|\]
\[\leq 2|f-f'|<\epsilon.\]\end{proof}

\begin{definition}
Let $\pi:\G\times X\to X$ be a group action. We say that $\pi$ is {\it separately $\sigma$-continuous} if \[\pi_g: X\to X;\, \pi_g(x)=\pi(g,x)\] and \[\pi_x:G\to X;\,\pi_x(g)=\pi (g,x)\] are $\sigma$-continuous for every $g\in \G$ and every $x\in X$.  This implies in particular that $\pi_g$ is an homeomorphism for every $g$.

\end{definition}

\begin{definition}
Let $V$ be a vector space, $F$ a family of subsets of $V$, and $U\subset V$. We say that $F$ is {\it $U$-fine} if every element of $F$ is contained in a translate of $U$.
\end{definition}

\begin{theorem}[Fixed-point theorem]\label{fixedpoint}
The following are equivalent:

\begin{enumerate}
\item $G$ is an $\omega_1$-saturated definably amenable group.

\item If $\pi$ is a linear action of $G$ into a convex compact subset $Y$
of a locally convex topological vector space $V$ such that
\begin{enumerate} \item $\pi_g$ is continuous for every $g\in G$.
\item $\pi_k$ is $\sigma$-continuous for some $k\in Y$.
\end{enumerate}
Then $\pi$ has a fixed point.
\end{enumerate}
\end{theorem}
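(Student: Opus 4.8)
The plan is to prove the two implications separately. For the direction $(1) \Rightarrow (2)$, I would use the invariant mean produced by the preceding theorem together with a standard Markov–Kakutani-style averaging argument, but carried out in the $\sigma$-continuous setting. The idea is that the orbit map $g \mapsto \pi_g(k)$ lands in the compact convex set $Y$, and $\pi_k$ being $\sigma$-continuous is exactly the hypothesis that makes this map behave like a measurable/integrable function against $\mu$. So I would define the candidate fixed point as a ``barycenter'' $y_0 = \int_G \pi_g(k)\, d\mu(g)$, interpreting this integral weakly: for each continuous linear functional $\varphi \in V^*$, the composition $g \mapsto \varphi(\pi_g(k))$ should be $\sigma$-continuous (because $\pi_k$ is $\sigma$-continuous and $\varphi$ is continuous, so the composite is $\sigma$-continuous into $\mathbb{R}$), bounded (since $Y$ is compact), and hence measurable and integrable against the extended measure $\mu$. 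One then sets $\varphi(y_0) := \mu(\varphi \circ \pi_k)$ and uses compactness/convexity of $Y$ plus the Hahn–Banach separation theorem to show that this prescription determines a genuine point $y_0 \in Y$.

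The key computation is then that $y_0$ is fixed. For any $h \in G$, continuity of $\pi_h$ lets us pass $\pi_h$ through the barycenter integral, and left-invariance of $\mu$ gives
\[
\varphi(\pi_h(y_0)) = \mu\bigl(\varphi \circ \pi_h \circ \pi_k\bigr) = \mu\bigl(g \mapsto \varphi(\pi_{hg}(k))\bigr) = \mu\bigl(g \mapsto \varphi(\pi_g(k))\bigr) = \varphi(y_0),
\]
for every $\varphi \in V^*$, whence $\pi_h(y_0) = y_0$ by Hahn–Banach. Here I would lean on Lemma \ref{addition} and the remark that characteristic functions of definable sets are $\sigma$-continuous to justify that the relevant functions are in the measurable class to which $\mu$ extends, and on the fact established in Subsection \ref{sec2} that $\sigma$-continuous functions into $\mathbb{R}$ are $\mu$-measurable.

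For the converse $(2) \Rightarrow (1)$, I would argue contrapositively or directly by constructing an invariant Keisler measure from a fixed point. The natural compact convex set is the space of Keisler measures (finitely additive probability measures on $\Def(G)$), which sits inside the locally convex space $\mathbb{R}^{\Def(G)}$ (or the dual of $C_\sigma(G)$ from Theorem \ref{Banach}) with the weak-$*$ topology and is compact by Tychonoff/Banach–Alaoglu. The group $G$ acts on this space by left translation, the action is affine and each $\pi_g$ is weak-$*$ continuous, and one must identify a point $k$ for which $\pi_k$ is $\sigma$-continuous to trigger hypothesis (2); the Dirac-type measure concentrated via the generic behaviour of $G$ is the candidate. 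A fixed point of this action is precisely a left-invariant Keisler measure, giving definable amenability. The $\omega_1$-saturation in (1) is a standing assumption rather than something to derive.

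The main obstacle I anticipate is the well-definedness of the barycenter $y_0$ in the first direction: one must check that the assignment $\varphi \mapsto \mu(\varphi \circ \pi_k)$ is a continuous linear functional that is actually \emph{represented} by a point of $Y$, rather than merely an element of $V^{**}$. This is where compactness of $Y$ and the locally convex structure of $V$ are essential, via the standard fact that the weak integral (Pettis/barycenter) of a measure supported on a compact convex set exists and lies in that set. Verifying that $g \mapsto \pi_g(k)$ is weakly $\mu$-measurable with the correct integrability, uniformly over all $\varphi \in V^*$, is the technical heart of the argument and the place where the hypothesis that $\pi_k$ is $\sigma$-continuous (and each $\pi_g$ continuous) must be used most carefully.
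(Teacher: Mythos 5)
Your direction (1)$\rightarrow$(2) is a genuinely different route from the paper's, and it is essentially sound. The paper follows Wagon's proof of the classical fixed-point theorem: it builds a net indexed by finite minimal open covers $F$ of $Y$ ordered by refinement, forms Riemann-sum-like convex combinations $\Phi(F)=\sum_{U\in F}\mu\bigl(\pi_k^{-1}(\cdot)\bigr)\, s_U^F$ (here $\sigma$-continuity of $\pi_k$ enters only to make the pulled-back pieces measurable), proves the net converges by compactness and fineness estimates, shows the limit is independent of the tag points, and uses left-invariance of $\mu$ to conclude the limit is fixed. In other words, the paper constructs your barycenter $y_0$ by hand instead of invoking weak-integral machinery; indeed the paper closes with exactly your observation, that only measurability of $\pi_k$ was used. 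Your route is shorter, and the obstacle you flag (that $\varphi\mapsto\mu(\varphi\circ\pi_k)$ is represented by a point of $Y$) is handled by the standard argument: the sets $\{y\in Y:\varphi(y)=\mu(\varphi\circ\pi_k)\}$ are closed in the compact $Y$, and any finite subfamily has nonempty intersection by separation in $\mathbb{R}^n$. But one step you gloss needs real care: ``passing $\pi_h$ through the integral'' involves $\varphi\circ\pi_h$, which is continuous and affine on $Y$ but in general \emph{not} the restriction of an element of $V^*$, since $\pi_h$ is only assumed continuous on $Y$. You must either invoke the standard fact that continuous affine functions on a compact convex set are uniform limits of restrictions of continuous linear functionals plus constants, or run your separation argument over the family of all continuous affine functions on $Y$ from the start. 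This is fixable, but it is precisely the kind of technicality the paper's net construction avoids.

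In direction (2)$\rightarrow$(1) your framework matches the paper's --- the paper acts on the unit ball of $C_\sigma(G)^*$ by $\pi_g(F)(f)=F({}_gf)$, and your compact convex set of Keisler measures inside $\mathbb{R}^{\Def(G)}$ is an invariant subset of essentially the same dual, arguably cleaner since a fixed point there is automatically an invariant Keisler measure (the paper's unit ball contains $0$, a trivially fixed point, so it implicitly needs such a restriction as well) --- but your proposal has a genuine gap at the one place where this direction has content: identifying a point $k$ with $\sigma$-continuous orbit map. Your candidate, ``the Dirac-type measure concentrated via the generic behaviour of $G$,'' is vague and points the wrong way: no genericity is available or needed (a definably amenable group need not have generic types; avoiding any such hypothesis is the point of the theorem). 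The correct candidate is simply a Dirac measure $\delta_h$ for an arbitrary $h\in G$ --- in the paper's language, the evaluation functional $F_h(f)=f(h)$ --- and the heart of the direction is the explicit verification that its orbit map is $\sigma$-continuous: in your setting the preimage of a sub-basic open $\{\nu:\nu(D)\in B\}$ is $\emptyset$, $G$, $Dh^{-1}$, or its complement, hence definable; in the paper's setting preimages of sub-basic opens are of the form $f^{-1}(B)h^{-1}$, countable unions of right translates of definable sets, using $\sigma$-continuity of $f\in C_\sigma(G)$. Without this computation your sketch of (2)$\rightarrow$(1) does not yet prove anything; with it, it coincides with the paper's argument.
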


\begin{proof}
(1)$\rightarrow$(2): This is very similar to the proof in
\cite{Wa}. Let $k\in Y$ such that $\pi_k$ is $\sigma$-continuous.
Let
\newline $F=\{U_1,...,U_n\}$ be a minimal open cover of $Y$,
define
\[S_1=\pi_k^{-1}(U_1),...,S_i=\pi_k^{-1}(U_i\setminus
(U_1\cup...\cup U_{i-1}))\dots S_n=\pi_k^{-1}(U_n\setminus
(U_1\cup...\cup U_{n-1}))\] and $\mu_{U_i}=\mu(S_i)$ for every
$i\leq n$.

The numbers $\mu_{U_i}$ depend on the order of $F$, however $\Sigma_{i}\mu_{U_i}=1$ for any order of $F$.

Let $D$ be the set of finite minimal open covers of $Y$ ordered by refinement.

For every $F \in D$, let $\left\{s_U^F\right\}_{U \in F}$ be a set of
points such that $s_U^F \in U \cap Y$, and define the function.

\begin{equation*}
\begin{split}
\Phi : D &\longrightarrow Y \\
F & \mapsto \sum_{U \in F} \mu_U \cdot s_{U}^F,
\end{split}
\end{equation*}

The expression defining $\Phi(G)$ is a convex
linear combination, hence an element of $Y$.\\

\begin{claim} If $F_1 \preceq F_2$ are covers in $D$, $V$ is a symmetric convex neighborhood of 0 and $F_1$ is $V/2$-fine, then
$\Phi(F_1)-\Phi(F_2) \in V$.
\end{claim}

\begin{proof} We only show the case when $F_1 = \left\{U\right\}$, other cases will be similar:

Since $F_1$ is $V/2$-fine, there is some element $t$ such that $t+V/2 \supseteq U$. Let $\Phi(F_1) = s_U^{F_1}$ and $\Phi(F_2) =
\sum_{U' \in F_2} \alpha_{U'}\cdot s_{U'}^{F_2}$, then we have
$$\Phi(F_1)-\Phi(F_2) = s_{U}^{F_1} - \sum_{U' \in F_2}\alpha
_{U'} \cdot s_{U'}^{F_2}$$
$$ = \left\{s_{U}^{F_1}-t \right\}-\left\{ \left(\sum_{U' \in F_2}
\alpha_{U'} \cdot s_{U'}^{F_2}\right)-t \right\},$$
but the two terms within braces belong to $V/2$. By symmetry and convexity, this difference is in $V$.
\end{proof}

\begin{claim}
$\Phi$ is a convergent net.
\end{claim}

\begin{proof}
We can assume that $\Phi(D)$ is infinite. By compactness of $Y$ we
know that $\Phi(D)$ has an accumulation point $\tilde{k}$. We will
show that $\tilde{k}$ is the limit of the net: let $W$ a symmetric
convex neighborhood of $0$ and let $F\in D$ be $W/4$ fine. Since
$\tilde{k}$ is an accumulation point, we may assume that
$\Phi(F)\in \tilde{k}+W/2$. By the previous claim, for every
$F\preceq F'$ we have that $\Phi(F)-\Phi(F')\in W/2$, therefore
$\Phi(F')\in \tilde{k}+W$. Therefore $\tilde{k}$ is a limit of the
net. By Hausdorff, it is unique.
\end{proof}

\begin{claim} \label{claim 2}
The limit of $\Phi$ does not depend on the choice of the points
$s_U^F$.
\end{claim}

\begin{proof}
let $\Psi$ be a function which is defined in the same way as
$\Phi$ but choosing different points $r_U^F$ for the covers in
$D$. Assume that $F$ is $W/4$-fine for an arbitrary convex
symmetric neighborhood of the origin $W$ and such that $\Phi(F)
\in \tilde{k} + W/2$, it follows then that $s_U^F - r_U^F \in W/2$
which in turns yields to $\Phi(F) - \Psi(F) \in W/2$, thus leading
to $\Psi(F) \in \tilde{k} + W$, as desired.
\end{proof}

\medskip

Now, since the action provided by each element of $G$ over $X$ is continuous,
we know that $\hat{g}: D \rightarrow D$ with $\hat{g}(F) := \left\{\pi_g(U) \mid
U \in F\right\}$ defines a permutation of $D$ for every $g \in G$. The action
of each element $g$ also permutes selection points for the open covers, this
allows us to define the following function.

\begin{equation*}
\begin{split}
\Phi_g : D &\longrightarrow Y\\
F &\mapsto \sum_{U \in \hat{g}(F)} \mu_U \cdot \left(\pi_g
\left(s_{g^{-1}(U)}^F\right)\right).
\end{split}
\end{equation*}

\noindent Claim \ref{claim 2} implies that $\Phi_g \rightarrow_{F}
\tilde{k}$.

On the other hand we have
$$\sum_{U \in \hat{g}(F)} \mu_U \cdot \left(\pi_g\left( \ s_{g^{-1}(U)}^F \right)
 \right) = \pi_g \left(\sum_{U \in \hat{g}(F)} \mu_U \cdot
 \ s_{g^{-1}(U)}^F\right) = \pi_g \left(\Phi(F)\right),$$

\noindent showing that $g\Phi \rightarrow_F \tilde{k}$\footnote{
Notice that if we fix the order on $F$ established in the definition of $\Phi$,
when passing to $gF$ the associated values $\mu_U$ do not change  since
the given measure is $G$-invariant.}. Finally, the continuity of the action of $g$
implies that $g\Phi \rightarrow_F g\tilde{k}$ showing that $\tilde{k}$ is in
fact a fixed point for the action.

\bigskip

(2) $\rightarrow$ (1): By Theorem \ref{Banach} we know that the
space $C_{\sigma}(G)$ of bounded $\sigma$-continuous functions
form a Banach vector space. Let $X:=C_{\sigma}(G)^*$ be its dual.
By Alaoglu, the unit ball is a convex compact subset of $X$ with
the weak$^*$ topology, and we can define a linear action of $G$ on
$X$ by having, for any $g\in G$, any $F\in X$, and any $f\in
C_{\sigma}(G)$,
\[
\pi_g(F)(f)=F(_gf),
\]
where \[_gf(h)=f(gh)\] for any $h\in G$.
\begin{claim}
For a fixed $g$ we have $\pi_g:X\rightarrow X$ is continuous in
the weak$^*$-topology.
\end{claim}

\begin{proof}
Let $U$ be a sub-basic open set in $X$ with the weak$^*$ topology,
so that $U:=\{F\mid F(f)\in B\}$ where $B$ is an open interval in
$\mathbb R$. Then
\[
\pi_g^{-1}(U)=\{F\mid _gF(f)\in B\}=\{F\mid F(_gf)\in B\} \]
which is by definition another sub-basic open set in $X$.
\end{proof}

Let $h\in G$ be any element. It is easy to see that the
evaluation map $F_h(f)=f(h)$ is in $X$.

\begin{claim}
For every $h\in G$, the map $F_h$ is $\sigma$-continuous.
\end{claim}

\begin{proof}
Once again, let $U$ be a sub-basic open set in $X$ of the form
\[U:=\{F\mid F(f)\in B\}\] for $B$ an open interval in $\mathbb R$.
By definition,
\[
F_h^{-1}(U):=\{g\in G \mid F_h(g)\in B\}=\{g\in G \mid f(gh)\in B\}.
\]
Since $f$ is $\sigma$-continuous we can find a countable family $\{D_i\}_{i<\omega}$ of definable subsets
\[
\bigcup_{i\in \omega} D_i=f^{-1}(U),\] and
\[
F_h^{-1}(U)=\left(f^{-1}\left(U\right)\right)h^{-1}=\left(\bigcup_{i\in
\omega} D_i\right)h^{-1}=\bigcup_{i\in \omega} D_ih^{-1}\] which
is a countable union of definable sets, as required.
\end{proof}

It follows that the action of $G$ on the unit ball of $X$ has a
fixed point $F$. The characteristic functions of
definable sets are $\sigma$-continuous, so we know that we can
define a finitely additive measure on definable subsets of $G$ by
$\mu(D)=F({\bf 1}_D)$. But then
\[
\mu(gD)=F({\bf 1}_{gD})=F(_g{\bf 1}_D)=\,_gF({\bf 1}_D)=F({\bf 1}_D)
\]
by invariance, thus proving definable amenability of $G$.

\end{proof}

\bigskip

To prove (1)$\rightarrow$(2) we only used that $\pi_k$ was
measurable. Since any $\sigma$-continuous function is a measurable
function, we have the following equivalent statement.

\begin{theorem*}
The following are equivalent:

\begin{enumerate}
\item $G$ is an $\omega_1$-saturated definably amenable group.

\item If $\pi$ is a linear action of $G$ into a convex compact subset $Y$
of a locally convex topological vector space $V$ such that
\begin{enumerate} \item $\pi_g$ is continuous for every $g\in G$.
\item For some $k\in Y$ the function $\pi_k$ is a measurable function from the $\sigma$-algebra of definable sets in $G$ into the Borel $\sigma$-algebra of $Y$.
\end{enumerate}
Then $\pi$ has a fixed point.
\end{enumerate}

\end{theorem*}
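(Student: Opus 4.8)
The plan is to reduce both implications to the already-established Theorem \ref{fixedpoint}, exploiting the fact recorded in Subsection \ref{sec2} that every $\sigma$-continuous function into a space carrying a Borel measure is measurable in the usual sense. This makes the measurability hypothesis in (2b) a weakening of the $\sigma$-continuity hypothesis used in Theorem \ref{fixedpoint}, so the two directions pull in opposite logical directions and are handled separately.

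For (1)$\Rightarrow$(2), I would revisit the proof of (1)$\Rightarrow$(2) in Theorem \ref{fixedpoint} and observe that $\sigma$-continuity of $\pi_k$ is never actually invoked. The only role played by $\pi_k$ there is to produce the sets $S_i=\pi_k^{-1}(U_i\setminus(U_1\cup\cdots\cup U_{i-1}))$ and to assign them the weights $\mu_{U_i}=\mu(S_i)$. For these weights to be defined it suffices that each $S_i$ be $\mu$-measurable, and since the sets $U_i\setminus(U_1\cup\cdots\cup U_{i-1})$ are Borel in $Y$, measurability of $\pi_k$ (hypothesis (2b)) guarantees exactly this. Every subsequent step---the definition of the net $\Phi$, the claims establishing that $\Phi$ converges to an accumulation point $\tilde{k}$ independent of the chosen selection points, and the computation $g\tilde{k}=\tilde{k}$---relies only on continuity of each $\pi_g$ (hypothesis (2a)) and $G$-invariance of $\mu$. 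Thus the identical argument produces a fixed point under the weaker hypothesis.

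For (2)$\Rightarrow$(1), I would argue that the present condition (2) implies condition (2) of Theorem \ref{fixedpoint}. Indeed, if $\pi$ is any linear action satisfying (2a) together with $\sigma$-continuity of $\pi_k$ for some $k$, then by the observation in Subsection \ref{sec2} that same $\pi_k$ is measurable, so $\pi$ falls under the hypotheses of the present (2) and therefore has a fixed point. Hence the hypothesis here is at least as strong as that of Theorem \ref{fixedpoint}, and invoking its (2)$\Rightarrow$(1) direction yields that $G$ is $\omega_1$-saturated and definably amenable.

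There is essentially no genuine obstacle; the entire content lies in checking that the original fixed-point construction is insensitive to the upgrade from $\sigma$-continuity to mere measurability. The one point warranting care is confirming that the convergence claims in the proof of Theorem \ref{fixedpoint} do not secretly use topological regularity of the preimages $\pi_k^{-1}$ of open or compact sets. A line-by-line reading shows they do not: once the $S_i$ are measurable and the weights $\mu_U$ are fixed, those claims are purely statements about the convex geometry of $Y$ and the fineness of covers, so the argument transfers verbatim.
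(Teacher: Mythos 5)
Your proposal is correct and matches the paper's own argument exactly: the authors justify this theorem with precisely the two observations you make, namely that the proof of (1)$\Rightarrow$(2) in Theorem \ref{fixedpoint} only used measurability of $\pi_k$ (to make the sets $S_i$ $\mu$-measurable and thus define the weights $\mu_{U_i}$), and that since every $\sigma$-continuous function is measurable, statement (2) here subsumes statement (2) of Theorem \ref{fixedpoint}, so its (2)$\Rightarrow$(1) direction applies verbatim.
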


\end{document}